\newtheorem{theorem}{Theorem}[section]
\newtheorem{proposition}[theorem]{Proposition}
\newtheorem{lemma}[theorem]{Lemma}
\newtheorem{conjecture}[theorem]{Conjecture}
\newtheorem{problem}[theorem]{Problem}
\newtheorem{definition}[theorem]{Definition}
\newcommand{\T}{{\cal T}}
\newcommand{\R}{{\cal R}}
\newcommand{\B}{{\cal B}}
\newcommand{\F}{{\cal F}}
\newcommand{\A}{{\cal A}}
\begin{document}

\title{Dominant tournament families
	\thanks{This research was supported by the Israel Science Foundation (grant No. 1082/16).}}

\author{
Raphael Yuster
\thanks{Department of Mathematics, University of Haifa, Haifa 3498838, Israel.
	Email: raphael.yuster@gmail.com}
}

\date{}

\maketitle

\setcounter{page}{1}

\begin{abstract}

For a tournament $H$ with $h$ vertices, its typical density is $h!2^{-\binom{h}{2}}/aut(H)$,
i.e. this is the expected density of $H$ in a random tournament.
A family $\F$ of $h$-vertex tournaments is {\em dominant} if for all sufficiently large $n$, there exists an $n$-vertex tournament $G$ such that the density of each element of $\F$ in $G$ is larger than its typical
density by a constant factor. Characterizing all dominant families is challenging already for small $h$. Here we characterize several large dominant families for every $h$. In particular, we prove the following for all $h$ sufficiently large: (i) For all tournaments $H^*$ with at least $5\log h$ vertices, the family of all $h$-vertex tournaments that contain $H^*$ as a subgraph is dominant. (ii) The family of all $h$-vertex tournaments whose minimum feedback arc set size is at most $\frac{1}{2}\binom{h}{2}-h^{3/2}\sqrt{\ln h}$ is dominant.
For small $h$, we construct a dominant family of $6$ (i.e. $50\%$ of the) tournaments on $5$ vertices and
dominant families of size larger than $40\%$ for $h=6,7,8,9$.
For all $h$, we provide an explicit construction of a dominant family which is conjectured to obtain an absolute constant fraction of the tournaments on $h$ vertices. Some additional intriguing open problems are presented.

\vspace*{3mm}
\noindent
{\bf AMS subject classifications:} 05C20, 05C35\\
{\bf Keywords:} tournament; density

\end{abstract}

\section{Introduction}

All graphs in this paper are finite and simple. Our main objects of study are {\em tournaments}, namely orientations of the complete graph.
The {\em density} of a tournament $H$ with $h$ vertices in a larger tournament $G$ is the probability
$d_H(G)$ that a randomly chosen set of $h$ vertices of $G$ induces a tournament that is isomorphic to $H$ (i.e. an $H$-copy in
$G$). Stated otherwise, if $c_H(G)$ denotes the number of $H$-copies in an $n$-vertex tournament $G$, then $d_H(G)=c_H(G)/\binom{n}{h}$.

There are several papers that consider possible densities of a given tournament in larger tournaments
\cite{Bucic2019,Chan2019,Chung1991,Coregliano2017,Coregliano2019,Hancock2019,Linial2016}.
Broadly speaking, there are a few designated regimes of interest.
The maximum density of $H$, denoted by $d_{max}(H)$ is the limsup of the sequence whose $n$'th element is
the maximum possible value of $d_H(G)$ ranging over $n$-vertex tournaments $G$.
The maximum density is sometimes called the {\em inducibility} of $H$ \cite{Linial2016}.
Clearly $d_{max}(H)=1$ if and only if $H=T_h$ is the transitive tournament on $h$ vertices.
Determining $d_{max}(H)$ for some $H$ may be quite challenging; for some small $H$, flag algebra techniques
are useful \cite{Coregliano2019,Coregliano2017,Linial2016,Razborov2007}.
One can similarly consider the minimum density of $H$ denoted by $d_{min}(H)$, but of course
$d_{min}(H)=0$ unless $H=T_h$.
For the latter, $d_{min}(T_h)$ is the liminf of the sequence whose $n$'th element is
the minimum possible value of $d_{T_h}(G)$ ranging over $n$-vertex tournaments $G$.
The {\em typical density}, denoted by $d(H)$ is the expected density of $H$ is a random tournament.
By a random tournament we mean, as usual, the probability space of $n$-vertex tournaments where the direction of each edge is chosen independently and uniformly at random.
Observe that $d(H)$ is independent of $n$ and is easy to compute.
The probability of a labeled random $h$-vertex tournament to
be isomorphic to a labeled copy of $H$ is $2^{-\binom{h}{2}}$. Hence, $d(H)=h!2^{-\binom{h}{2}}/aut(H)$ where
$aut(H)$ is the size of the automorphism group of $H$. In particular, $d(T_h)=h!2^{-\binom{h}{2}}$.
The typical density plays an important role in the study of quasi-random tournaments
\cite{Bucic2019,Chung1991,Coregliano2019,Hancock2019}.

By their definitions, we have that $d_{min}(H) \le d(H) \le d_{max}(H)$ for every $H$.
There are a few tournaments where one of the inequalities is an equality.
For the transitive tournament $T_h$ 
it is well-known that $d_{min}(T_h) = d(T_h)$ (see Exercise 10.44(b) of \cite{Lovasz2007}).
There are a few sporadic cases where $d_{max}(H) = d(H)$. This is easily shown to hold for $H=C_3$,
the directed triangle, but it is also known to hold for the tournament on $5$ vertices $H_5^8$ of Figure \ref{f:tour-5} as proved by Coregliano et al. \cite{Coregliano2019} (there called $T_5^8$). It is known that all tournaments on four vertices have $d_{max}(H) > d(H)$ as well
as all tournaments on at least $7$ vertices \cite{Bucic2019}.

Let $\T_h$ denote the set of all tournaments on $h$ vertices.
So on the one hand, for a given $H \in \T_h$ (except for the few sporadic cases where $d_{max}(H)=d(H)$ discussed above),
one can construct arbitrarily large tournaments $G$ in which $d_H(G)$ is significantly larger than the typical density $d(H)$,
but certainly no such $G$ can be universal for all elements of $\T_h$ since clearly for any $G$ we have
$$
1 = \sum_{H \in \T_h} d(H) = \sum_{H \in \T_h} d_H(G)\;.
$$
So, the natural question that emerges is, to what extent can a significant subset $\F \subset \T_h$ have the property
that there are arbitrarily large tournaments $G$ that are universal for all elements of $\F$.

\begin{definition}
A set $\F \subset \T_h$ is {\em dominant} if there exists $\beta > 0$ such that for all sufficiently
large $n$, there exists an $n$-vertex tournament $G$ for which $d_H(G) \ge (1+\beta)d(H)$ for all $H \in \F$.
\end{definition}
Trivially, all singletons (except for the sporadic cases discussed above where $d_{max}(H)=d(H)$) are dominant, but
we are of course interested with the existence of large dominant $\F$.
Clearly, if one can characterize all maximal dominant $\F$ then this would characterize all dominant $\F$, but at
present this seems like a problem beyond our reach (we do not even have an exact formula for the number of elements of $\T_h$). A more realistic goal is to determine large $\F$ that can be explicitly characterized in the sense that the members
of $\F$ are exactly the ones that satisfy some natural property (namely, given a tournament $H$, one can deterministically check whether $H$ satisfies the property). This is indeed what we do in this paper for a
few very natural properties.

In Section \ref{sec:bias}, we prove that for all sufficiently large $h$, the family of tournaments whose minimum feedback arc set size is at most
$\frac{1}{2}\binom{h}{2}-h^{3/2}\sqrt{\log h}$ \footnote{Unless stated otherwise, all logarithms are in base $2$.} is a dominant family. We note that this result cannot be improved by much as it is well-known that
the minimum feedback arc set size of {\em every} $h$-vertex tournament is at most $\frac{1}{2}\binom{h}{2}-
\Theta(h^{3/2})$ \cite{Spencer1971}.
Our main tool in the proof is the notion of the bias polynomial (a notion defined in Section \ref{sec:bias}).
We also prove that the subset of all $h$-vertex tournaments whose bias polynomial has a local minimum at $0$, is dominant.
We show that for some small $h$, this subset is of significant size. For example, for each $h=6,7,8,9$ more than 40\% of the tournaments on $h$ vertices are of this type, and half of the tournaments on $5$ vertices are of this type.
We conjecture that for all $h$, the fraction of such tournaments out of all $h$-vertex tournaments is at least a positive constant independent of $h$.

In Section \ref{sec:H*}, we prove that for all sufficiently large $h$, if $H^*$ is a tournament with at least
$5\log h$ vertices, then the family of all elements of $\T_h$ that contain an $H^*$-copy, is dominant.
Again, this result cannot be improved by much as it is well-known \cite{Stearns1959}
that every element of $\T_h$ contains $T_{\lceil {\log h} \rceil}$.

In section \ref{sec:concluding}, we discuss a few open problems and conjectures related to dominant families. Solving some of these problems may be challenging.

\section{The bias polynomial and dominant families}\label{sec:bias}

\subsection{The bias polynomial}

We define a probability space on labeled $n$-vertex tournaments that generalizes
the standard uniform probability space (the random tournament model).
Consider tournaments with labeled vertices $[n]=\{1,\ldots,n\}$ and let $p \in [0,1]$.
If $i < j$ then make $(i,j)$ an edge with probability $p$ (so $(j,i)$ is an edge with probability $1-p$) where all $\binom{n}{2}$ choices are independent.
Denote the resulting probability space by $T(n,p)$ and observe that $T(n,\frac{1}{2})$ is the usual notion
of a random tournament. We note that there are other models of random graphs where the probability of an edge depends on the order of vertex labels (see, e.g., \cite{Alon2008}).

Given $G \sim T(n,p)$, define the typical density of $H$ in $G$, denoted by $d(H,p)$,
to be the expected density of $H$ in $G$. Notice that $d(H)=d(H,\frac{1}{2})$.
Using Chebyshev's inequality, it is easy
to prove that $S=\{H \in \T_h\,|\, d(H,p) > d(H)\}$ is dominant (see the proof of Lemma \ref{l:concentration} below).
However, recall that we would like to obtain explicit constructions of large dominant sets and for this we need to pinpoint some explicit range of $p$ that ensures that $S$ is large. To this end, it is beneficial to observe that $d(H,p)$ is, in fact, a polynomial in $p$.
Indeed, each order of the vertices of $H$ corresponds to a term in $d(H,p)$ of the form $p^k(1-p)^{\binom{h}{2}-k}$
where $k$ is the number of edges of $H$ pointing from a lower ordered vertex to a higher one.
So, for instance, for $H=T_3$ we have that $d(T_3,p)=p^3+(1-p)^3+2p^2(1-p)+2p(1-p)^2=1-p+p^2$
while for $H=C_3$ we have $d(C_3,p)=p^2(1-p)+p(1-p)^2=p-p^2$. Since, by symmetry, $d(H,p)=d(H,1-p)$ it is more convenient to work with the following definition.
\begin{definition}
	The {\em bias polynomial} of $H$ is $B(H,x)=d(H,x+\frac{1}{2})$.
\end{definition}
The following simple lemma lists some obvious properties of the bias polynomial.
\begin{lemma}\label{l:bias-properties}
	Let $B(H,x)$ be the bias polynomial of a tournament $H$ with $h$ vertices.
\begin{enumerate}
	\item $B(H,x)$ is an even polynomial. Equivalently, each term of $B(H,x)$ is a constant multiple of $x$ to an even power.
	\item $B(H,0)=d(H)$, $B(T_h,\pm \frac{1}{2})=1$ and otherwise $B(H,\pm \frac{1}{2})=0$.
	\item $0$ is a local extremum of $B(H,x)$. It is a local minimum if and only if the coefficient of the lowest order term of $B(H,x)-d(H)$ is positive.
	\item $\sum_{H \in \T_h} B(H,x) = 1$.
\end{enumerate}
\end{lemma}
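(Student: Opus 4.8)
The plan is to reduce every part to the explicit polynomial expansion of $d(H,p)$ already recorded in the text. By linearity of expectation, for $G\sim T(n,p)$ the expected density $d(H,p)$ equals the probability that a fixed $h$-subset of $[n]$, equipped with its inherited linear order, induces a copy of $H$; and the induced labeled tournament on such a subset is distributed exactly as $T(h,p)$. Hence $d(H,p)=\Pr[T(h,p)\cong H]=\sum_{H'}p^{k(H')}(1-p)^{\binom h2-k(H')}$, the sum ranging over the $h!/aut(H)$ labeled copies $H'$ of $H$ on vertex set $[h]$, where $k(H')$ counts edges of $H'$ directed from a smaller to a larger label. I will use this identity throughout, together with the defining substitution $B(H,x)=d(H,x+\tfrac12)$.

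For part (1), I would exhibit the involution on labeled copies of $H$ induced by the order-reversing relabeling $i\mapsto h+1-i$ of $[h]$. Relabeling does not change the isomorphism type, so this map sends labeled copies of $H$ to labeled copies of $H$; and it converts every forward edge into a backward edge, so $k(H')\mapsto\binom h2-k(H')$. Therefore the sum defining $d(H,p)$ is invariant under $p\leftrightarrow 1-p$, i.e.\ $d(H,p)=d(H,1-p)$, which becomes $B(H,x)=B(H,-x)$ after substituting $p=x+\tfrac12$; an even polynomial has only even-degree terms. Part (2) is then immediate: $B(H,0)=d(H,\tfrac12)=d(H)$ by definition; $T(h,1)$ is the transitive tournament $T_h$ (all forward edges present) and $T(h,0)$ is its reverse, again isomorphic to $T_h$, so $B(H,\pm\tfrac12)=d(H,1)=\Pr[T_h\cong H]$ equals $1$ if $H=T_h$ and $0$ otherwise (the value at $-\tfrac12$ also follows from (1)).

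For part (3), write $B(H,x)-d(H)=a_{j_0}x^{2j_0}+a_{j_1}x^{2j_1}+\cdots$ with $j_0<j_1<\cdots$ and $a_{j_0}\neq 0$, which is legitimate by (1) and (2). Since $x^{2j_0}>0$ for $x\neq 0$ while the remaining terms are $o(x^{2j_0})$ as $x\to 0$, the sign of $B(H,x)-d(H)$ on a punctured neighborhood of $0$ equals the sign of $a_{j_0}$; hence $0$ is a strict local minimum when $a_{j_0}>0$, a strict local maximum when $a_{j_0}<0$, and in either case a local extremum. (The only degenerate case is $B(H,x)\equiv d(H)$, in which "lowest order term" is vacuous and $0$ is trivially both a minimum and a maximum; one may simply assume $B(H,x)\not\equiv d(H)$ in the statement.) For part (4), for every fixed $p\in[0,1]$ the events $\{T(h,p)\cong H\}$, over $H\in\T_h$, partition the sample space of labeled $h$-vertex tournaments, so $\sum_{H\in\T_h}d(H,p)=1$; since this is an identity of polynomials in $p$ valid on an infinite set, it holds identically, and substituting $p=x+\tfrac12$ gives $\sum_{H\in\T_h}B(H,x)=1$. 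As this is a "simple lemma," there is no genuine obstacle; the only two points requiring a moment's care are verifying that the order-reversing relabeling in (1) really is a bijection of labeled copies of $H$ onto itself (rather than onto copies of its reverse), and disposing of the degenerate constant case in (3).
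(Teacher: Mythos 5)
Your proof is correct and follows essentially the same route as the paper's: the symmetry $d(H,p)=d(H,1-p)$ (which you justify via the order-reversing relabeling, where the paper simply cites symmetry) gives evenness, and the remaining parts follow from the polynomial expansion exactly as in the paper, with your leading-term argument for part (3) being an equivalent substitute for the paper's derivative sign-change remark. One tiny observation: the degenerate case $B(H,x)\equiv d(H)$ you set aside in part (3) never actually occurs, since part (2) gives $B(H,0)=d(H)>0$ while $B(H,\pm\tfrac12)\in\{0,1\}$ differs from $d(H)$ for every $H$ with $h\ge 2$.
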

\begin{proof}
	Property 1 follows since $B(H,x)=d(H,x+\frac{1}{2})=d(H,\frac{1}{2}-x)=B(H,-x)$. Property 2 follows since
	$B(H,0)=d(H,\frac{1}{2})=d(H)$. Property 3 follows since $B(H,x)$ is an even polynomial and the condition for local minimum follows since this is the case when the derivative at zero changes sign from negative to positive.
	Property 4 follows from the fact that for every $0 \le p \le 1$, $\sum_{H \in \T_h}d(H,p)=1$.
\end{proof}
Let $\F(h,x)=\{H \in \T_h\,|\, B(H,x) > d(H)\}$. While $\F(h,0)=\emptyset$ and $F(h,\pm \frac{1}{2})=\{T_h\}$,
we will prove that for certain $x=x(h)$, $\F(h,x)$ is large. For this to be of use, we need the following.
\begin{lemma}\label{l:concentration}
For every $x \in (0,\frac{1}{2})$, $\F(h,x)$ is dominant.
\end{lemma}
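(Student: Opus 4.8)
The plan is to fix $x \in (0,\tfrac12)$, set $p = x + \tfrac12 \in (\tfrac12, 1)$, and sample $G \sim T(n,p)$; I will show that with positive probability (in fact, with probability tending to $1$) the single random tournament $G$ simultaneously witnesses $d_H(G) \ge (1+\beta)d(H)$ for \emph{every} $H \in \F(h,x)$. The key point is that $\F(h,x)$ is a fixed finite set (a subset of $\T_h$, which has finitely many elements for fixed $h$), so a union bound over its members is harmless. Choose $\beta > 0$ small enough that $(1+\beta)d(H) \le \tfrac12\bigl(d(H,p) + d(H)\bigr)$ for all $H \in \F(h,x)$; this is possible because $d(H,p) = B(H,x) > d(H)$ for each such $H$ by definition of $\F(h,x)$, and there are finitely many $H$ to worry about, so the gaps $d(H,p) - d(H)$ have a positive minimum.

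The main step is a second-moment (concentration) estimate showing that for each fixed $H$, $d_H(G)$ is concentrated around its mean $d(H,p)$ when $G \sim T(n,p)$. Writing $c_H(G) = \sum_S X_S$ where $S$ ranges over the $\binom{n}{h}$ $h$-subsets of $[n]$ and $X_S$ is the indicator that $S$ induces an $H$-copy, we have $\mathbb{E}[c_H(G)] = \binom{n}{h} d(H,p)$. For the variance, two indicators $X_S, X_{S'}$ are independent whenever $|S \cap S'| \le 1$ (they depend on disjoint edge sets), so only the $O(n^{2h-2})$ pairs with $|S \cap S'| \ge 2$ contribute; hence $\mathrm{Var}(c_H(G)) = O(n^{2h-2})$ while $(\mathbb{E}[c_H(G)])^2 = \Theta(n^{2h})$ since $d(H,p) > 0$ for $p \in (\tfrac12,1)$ (every labelled pattern has positive probability). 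Chebyshev's inequality then gives $\Pr\bigl[\,|d_H(G) - d(H,p)| > \tfrac12(d(H,p) - d(H))\,\bigr] = O(1/n) \to 0$. Taking a union bound over the finitely many $H \in \F(h,x)$, the probability that $d_H(G) \ge d(H,p) - \tfrac12(d(H,p)-d(H)) = \tfrac12(d(H,p)+d(H)) \ge (1+\beta)d(H)$ holds for all $H \in \F(h,x)$ simultaneously tends to $1$, so in particular such a $G$ exists for all large $n$. This exhibits $\F(h,x)$ as dominant with the constant $\beta$ chosen above.

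I do not expect a serious obstacle here; the only mild care needed is in the variance bound — making sure the contribution of overlapping pairs is genuinely lower-order, which follows from the independence-on-disjoint-edges observation and a crude count of pairs $(S,S')$ with $|S\cap S'| = j$ for $2 \le j \le h$, each class having size $O(n^{2h-j}) = O(n^{2h-2})$ and each such pair contributing $O(1)$ to the covariance. One should also note $d(H,p)$ is bounded away from $0$ uniformly (it is a fixed positive constant depending only on $H$ and $p$), which is what makes the relative-error form of Chebyshev work and what lets the fixed $\beta$ do its job.
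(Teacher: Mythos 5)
Your proposal is correct and follows essentially the same route as the paper: sample $G \sim T(n,p)$ with $p = x+\tfrac12$, use a second-moment/Chebyshev concentration bound for each $c_H(G)$, and union-bound over the finitely many $H \in \F(h,x)$, with $\beta$ extracted from the minimum gap $B(H,x)-d(H)$. Your variance bound is even marginally sharper (you note independence already when $|S\cap S'|\le 1$, giving $O(n^{2h-2})$ versus the paper's $O(n^{2h-1})$), but this makes no difference to the conclusion.
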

\begin{proof}
	Fix $0 < x < \frac{1}{2}$. Let
	$$
	\beta = \min_{H \in \F(h,x)} \frac{B(H,x)}{d(H)}-1\;.
	$$
	Observe that $\beta > 0$ since by the definition of $\F(h,x)$ we have $B(H,x) > d(H)$ for every $H \in \F(h,x)$.
	We prove that for all sufficiently large $n$, there is an $n$-vertex tournament $G$ such that $d_H(G) \ge (1+\beta/2)d(H)$ holds for all $H \in \F(h,x)$, thus obtaining that $\F(h,x)$ is dominant.
	
	Let $p=x+\frac{1}{2}$ and consider $G \sim \T(n,p)$. Let $H \in \F(h,x)$ and notice that
	$B(H,x)=d(H,p)$ is the expected density of $H$ in $G$.
	Recall that $c_H(G)$ denotes the number of $H$-copies in $G$.
	So, the expected value of $c_H(G)$ is $\binom{n}{h}B(H,x)=\Theta(n^h)$. We may consider each $h$-set of vertices of $G$ as an indicator random variable for the event that the corresponding $h$-set induces a copy of $H$, thus $c_H(G)$ is the sum of these $\binom{n}{h}$ variables, each with success probability $B(H,x)$.
	But also notice that two indicator variables corresponding to disjoint $h$-sets are independent.
	Hence, the variance of $c_H(G)$ is only $O(n^{2h-1})$. By the second moment method (see \cite{Alon2004}),
	the probability that $c_H(G)$ is smaller than its expected value by more than $\binom{n}{h}\frac{\beta}{2}d(H)$ is $O(n^{-1})$.
	Since $n$ is chosen sufficiently large, we may assume that $n \gg |\F(h,x)|$.
	Hence there exists an $n$-vertex tournament $G$ such that for all $H \in \F(h,x)$ it holds that
	$c_H(G) \ge \binom{n}{h}B(H,x)-\binom{n}{h}\frac{\beta}{2}d(H)$ 
	and equivalently $d_H(G) \ge B(H,x)-\frac{\beta}{2}d(H)$.
	Finally, notice that by the definition of $\beta$,
	$$
	d_H(G) \ge B(H,x)-\frac{\beta}{2}d(H) \ge d(H) + \frac{\beta}{2}d(H) = \left(1+\frac{\beta}{2}\right)d(H)\;.
	$$
\end{proof}

\subsection{Minimum feedback arc set and dominant families}

For a tournament $H$, a {\em feedback arc set} of $H$ is a set of edges covering every directed cycle.
Equivalently, it is a spanning subgraph of $H$ whose complement is acyclic.
Let $a(H)$ denote the cardinality of a smallest feedback arc set of $H$.
While it is straightforward that $a(H) \le  \frac{1}{2}\binom{h}{2}$ and that $a(H)=0$ if and only if $H=T_h$,
determining the precise value is NP-Hard in general \cite{Alon2006}. 
Spencer \cite{Spencer1971}, improving earlier results of
Erd\H{o}s and Moon \cite{Erdos1965}, proved that $a(H) \le \frac{1}{2}\binom{h}{2} - \Theta(h^{3/2})$.
We will prove that the set of all tournaments whose $a(H)$ value is slightly below this upper bound
is dominant.

Let $\A(h,t)$ denote the set of all tournaments having $a(H) \le \frac{1}{2}\binom{h}{2}-t$.
\begin{theorem}
	$\A(h,h^{3/2}\sqrt{\ln h})$ is dominant for all $h \ge 30$.
\end{theorem}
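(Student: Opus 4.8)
The plan is to invoke Lemma~\ref{l:concentration}: it suffices to find a single value $x=x(h)\in(0,\tfrac12)$ such that every $H$ with $a(H)\le\tfrac12\binom h2-t$ (where $t=h^{3/2}\sqrt{\ln h}$) satisfies $B(H,x)>d(H)$, i.e. $d(H,p)>d(H)$ for $p=\tfrac12+x$. So the whole game is a lower bound on $d(H,p)$ in terms of $a(H)$. The natural way to get such a bound is to expand $d(H,p)$ over orderings of the vertices of $H$: for a fixed linear order $\sigma$ of $V(H)$, the contribution is $p^{m(\sigma)}(1-p)^{\binom h2-m(\sigma)}$, where $m(\sigma)$ is the number of backward edges of $H$ under $\sigma$ (edges pointing from a lower-ranked to a higher-ranked vertex — the ``feedback'' edges for that order). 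Summing over all $h!$ orders and over the $\binom h2+1$ possible values of $m(\sigma)$, write $d(H,p)=\sum_{k} N_k(H)\,p^{k}(1-p)^{\binom h2-k}$ where $N_k(H)$ is the number of orders with exactly $k$ backward edges, normalized so $\sum_k N_k(H)=1$ after dividing by $h!$ (being careful with $aut(H)$, but that cancels consistently). The key structural fact is that $N_k(H)=0$ for $k<a(H)$ and for $k>\binom h2-a(H)$, since the minimum number of backward edges over all orders is exactly $a(H)$; moreover $N_{a(H)}(H)\ge 1/h!$.

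The strategy then is: since the mass of the distribution $\{N_k\}$ is supported on $[a(H),\binom h2-a(H)]$, and $a(H)$ is much smaller than $\tfrac12\binom h2$, the polynomial $d(H,p)$ should be pulled up near $p=\tfrac12$ by the ``outer'' terms — but this is too crude by itself, because the bulk of the $h!$ orders still have $m(\sigma)$ close to $\tfrac14 h^2$, and for $p$ near $\tfrac12$ those dominate. The right comparison is differential: compute $\frac{d}{dp}d(H,p)$ at $p=\tfrac12$ and show it is negative enough, or rather expand $d(H,p)$ around $p=\tfrac12$ using the bias polynomial's even structure. Write $B(H,x)-d(H)$ and look at its lowest-order nonzero term; by Property~3 of Lemma~\ref{l:bias-properties}, $0$ is a local minimum iff this coefficient is positive, which is equivalent to a variance-type quantity being large. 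Concretely, the coefficient of $x^2$ in $B(H,x)$ is proportional to $\mathbb{E}_\sigma[(m(\sigma)-\tfrac12\binom h2)^2]-\tfrac14\binom h2$ up to normalization — i.e. it is positive exactly when the number of backward edges has variance exceeding the ``random'' baseline $\tfrac14\binom h2$. So I would show: if $a(H)\le \tfrac12\binom h2-t$ with $t=h^{3/2}\sqrt{\ln h}$, then $\mathrm{Var}_\sigma(m(\sigma))$ is large enough (at least, say, $\tfrac14\binom h2 + c\,t^2/\text{something}$) to force the $x^2$-coefficient of $B(H,x)$ to be positive, hence $0$ is a strict local minimum, hence $B(H,x)>d(H)$ for all small enough $x>0$ — but I need the same $x$ to work for all $H$ simultaneously, so this local argument must be made quantitative and uniform.

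To get uniformity I would instead argue at a fixed modest $x$ (say $x = h^{-1/4}$ or a similarly chosen function of $h$, to be optimized): bound $d(H,p)-d(H)$ from below by isolating the single guaranteed order achieving $m(\sigma)=a(H)$, which contributes $p^{a(H)}(1-p)^{\binom h2-a(H)}/h!$, and contrast it against the upper bound $d(H)\le \binom h2! \cdot 2^{-\binom h2}\cdot(\text{count near } \tfrac12\binom h2)$. More cleanly: use that for $p=\tfrac12+x$, the term $p^k(1-p)^{\binom h2-k} = 2^{-\binom h2}(1+2x)^k(1-2x)^{\binom h2-k} = 2^{-\binom h2}(1-4x^2)^{\binom h2-k}\bigl(\tfrac{1+2x}{1-2x}\bigr)^{2k-\binom h2}$, so relative to the random value $2^{-\binom h2}$ the order $\sigma$ is up-weighted by a factor $\exp(\Theta(x)\cdot|2m(\sigma)-\binom h2|)$ when $m(\sigma)$ is far from $\tfrac12\binom h2$. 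Orders with $m(\sigma)$ near the extremes get weight $\exp(\Theta(x)\cdot(\tfrac12\binom h2 - a(H))) \ge \exp(\Theta(x\,t))=\exp(\Theta(x h^{3/2}\sqrt{\ln h}))$; picking $x$ so that $x h^{3/2}\sqrt{\ln h}\gg \ln(h!)=\Theta(h\ln h)$ — e.g. $x$ a small constant, or $x=h^{-1/3}$ — this exponential gain beats the $h!$-fold loss from having only one guaranteed extremal order and from the $h!$ orders concentrated near the center pulling the other way. I still must control the downward pull: orders with $m(\sigma)$ near the center contribute roughly $d(H)$ itself, and I need the sum over \emph{all} orders to exceed $d(H)=\sum_k N_k 2^{-\binom h2}$; the cleanest route is the second-moment/variance identity above, showing $\sum_k N_k (1-4x^2)^{\binom h2-k}(\tfrac{1+2x}{1-2x})^{2k-\binom h2} > 1$ by convexity in $k$ combined with the support constraint $N_k=0$ outside $[a(H),\binom h2-a(H)]$ and a lower bound on how much mass must lie away from the center (here one uses that $a(H)$ small forces, via Spencer-type counting, a nontrivial fraction of orders to have few backward edges).

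The main obstacle, and the step I expect to require real care, is exactly this last point: translating ``$a(H)$ is small'' into ``enough of the $h!$ orderings have $m(\sigma)$ far from $\tfrac14 h^2$,'' with quantitative bounds strong enough that the single choice $x=x(h)$ works uniformly over all of $\A(h,t)$ while keeping $x$ bounded away from $\tfrac12$ (so Lemma~\ref{l:concentration} applies). Merely knowing one optimal order exists is not enough for the convexity argument; one needs a lower bound on $\sum_{k\le \tfrac12\binom h2 - t/2} N_k$ or a direct lower bound on $\mathrm{Var}_\sigma(m(\sigma))$ of the right order of magnitude, and establishing that from the hypothesis $a(H)\le\tfrac12\binom h2-t$ is the crux. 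The numerical threshold $h\ge 30$ will then fall out of making the various $\Theta(\cdot)$ estimates explicit.
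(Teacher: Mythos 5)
Your overall frame is right---reduce to Lemma \ref{l:concentration}, expand $d(H,p)$ over the $h!$ vertex orderings, and exploit the one ordering $\pi_0$ with at most $a(H)\le\frac12\binom h2-t$ backward edges---but you talk yourself out of the step that actually closes the argument. Since every term $(\frac12+x)^{f(\pi)}(\frac12-x)^{b(\pi)}$ is nonnegative and the target $aut(H)\,d(H)=h!\,2^{-\binom h2}$ is just a fixed number, you may discard all $h!-1$ other terms and show that the \emph{single} term for $\pi_0$ already exceeds $h!\,2^{-\binom h2}$; this is exactly what the paper does. There is no ``downward pull'' to control: the orders with $m(\sigma)$ near the center only add to the left-hand side. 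Your declared crux---lower-bounding $\mathrm{Var}_\sigma(m(\sigma))$ or the number of orderings far from $\frac14h^2$, which you correctly note you cannot extract from the hypothesis---is a gap you created by insisting on a convexity/second-moment comparison; the statement ``merely knowing one optimal order exists is not enough'' is false for the one-term bound, and since you leave that harder route unresolved, the proof as proposed is incomplete.

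The second problem is quantitative: your candidate values of $x$ do not work. Your identity for $p^k(1-p)^{N-k}$ is off (the correct ratio form is $2^{-N}(1-4x^2)^{N/2}\bigl(\tfrac{1+2x}{1-2x}\bigr)^{k-N/2}$ with $N=\binom h2$), and the omitted universal penalty $(1-4x^2)^{N/2}\approx e^{-2x^2\binom h2}$ is precisely what constrains $x$. One needs roughly $4xt \gtrsim 2x^2\binom h2+\ln(h!)$ with $t=h^{3/2}\sqrt{\ln h}$, which forces $x=\Theta(\sqrt{\ln h/h})$ (the paper takes $x=(\ln h/h)^{1/2}$, giving gain $e^{4h\ln h}$ against penalty $e^{-h\ln h}$ and $h!\le h^h$). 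For $x$ a constant, $x=h^{-1/4}$, or $x=h^{-1/3}$, the term $x^2h^2$ dominates $xt$ and the single-term bound (indeed the whole inequality) fails. So both the choice of $x$ and the final comparison need to be redone along the paper's lines.
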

\begin{proof}
	We will prove that for all $h \ge 30$ it holds that $\A(h,h^{3/2}\sqrt{\ln h}) \subseteq \F(h,(\ln h/h)^{1/2})$ and hence the result will follow by Lemma \ref{l:concentration}.
	Let $x=(\ln h/h)^{1/2}$ and let $H \in \A(h,h^{3/2}\sqrt{\ln h})$.
	We must prove that $H \in \F(h,x)$, namely that $B(H,x) > d(H)$.
	Recalling that $d(H)=h!2^{-\binom{h}{2}}/aut(H)$, we must prove that $B(H,x) > h!2^{-\binom{h}{2}}/aut(H)$.
	
	Let the vertices of $H$ be labeled with $[h]=\{1,\ldots,h\}$. For a permutation $\pi \in S_h$, let
	$f(\pi)$ (the ``forward'' edges) denote the number of edges $(u,v)$ of $H$ with $\pi(u) < \pi(v)$ and let $b(\pi)=\binom{h}{2}-f(\pi)$ be the ``backward'' edges. Then we have that
	\begin{equation}\label{e:1}
	B(H,x) = \frac{1}{aut(H)}\sum_{\pi \in S_h} \left(\frac{1}{2}+x\right)^{f(\pi)}\left(\frac{1}{2}-x\right)^{b(\pi)}\;.
	\end{equation}
	So it suffices to prove that
	$$
	\sum_{\pi \in S_h} \left(\frac{1}{2}+x\right)^{f(\pi)}\left(\frac{1}{2}-x\right)^{b(\pi)} > h!2^{-\binom{h}{2}}\;.
	$$
	There are $h!$ terms on the left-hand side of the last inequality but some (in fact, most) of them are smaller than $2^{-\binom{h}{2}}$ as it is likely that for many permutations $\pi$ it holds that $f(\pi)$ and $b(\pi)$
	are very close, or $b(\pi)$ is larger than $f(\pi)$. But, on the other hand, we do know that for some permutation, $f(\pi)$ is considerably larger than $b(\pi)$. Indeed, since $H \in \A(h,h^{3/2}\sqrt{\ln h})$, there is
	a minimum feedback arc set of $H$ of size at most $\frac{1}{2}\binom{h}{2}-h^{3/2}\sqrt{\ln h}$.
	But recall that this means that there is an acyclic spanning subgraph of $H$ with at least
	$\frac{1}{2}\binom{h}{2}+h^{3/2}\sqrt{\ln h}$ edges. As each acyclic digraph has an ordering $\pi$
	of its vertices where all edges of the digraph are forward, we have that there exists $\pi_0$ such that
	$f(\pi_0) \ge \frac{1}{2}\binom{h}{2}+h^{3/2}\sqrt{\ln h}$ and consequently
	$b(\pi_0) \le \frac{1}{2}\binom{h}{2}-h^{3/2}\sqrt{\ln h}$. It therefore suffices to prove that
	$$
	\left(\frac{1}{2}+x\right)^{f(\pi_0)}\left(\frac{1}{2}-x\right)^{b(\pi_0)} > h!2^{-\binom{h}{2}}
	$$
	or equivalently that
	$$
	\left(1+2x\right)^{f(\pi_0)-b(\pi_0)}\left(1-4x^2\right)^{b(\pi_0)} > h!\;.
	$$
	Indeed, this holds since
	\begin{align*}
	& \left(1+2x\right)^{f(\pi_0)-b(\pi_0)}\left(1-4x^2\right)^{b(\pi_0)}	\\
	> & \left(1+\frac{2\sqrt{\ln h}}{\sqrt{h}}\right)^{2h^{3/2}\sqrt{\ln h}}\left(1-\frac{4\ln h}{h}\right)^{h^2/4}	\\
	> & e^{-2h\ln h}e^{3h\ln h} = h^h
	\end{align*}
	where the last inequality holds for all $h \ge 30$.	
\end{proof}

\begin{table}[t]
	\begin{center}
		\renewcommand{\arraystretch}{1.5}
		\begin{tabular}{|c|c|c|}
			\hline 
			tournament & bias polynomial & in $\B_h$\\ 
			\hline 
			$T_4$ & $\frac{3}{8}+2x^2+2x^4$ & $\surd$ \\ 
			\hline 
			$C_4$ &  $\frac{3}{8}-2x^2+2x^4$ & \\ 
			\hline 
			$D$ & $\frac{1}{8}-2x^4$ & \\ 
			\hline 
			$D^t$ & $\frac{1}{8}-2x^4$ & \\ 
			\hline 
		\end{tabular}
	\end{center}
	\caption{Tournaments on four vertices and their bias polynomials.}\label{table:1}
\end{table} 

It is important to stress that $\A(h,h^{3/2}\sqrt{\ln h})$, while large, is {\em not} a constant
proportion of the family $\T_h$, as proved by Spencer \cite{Spencer1980} and de la Vega \cite{Vega1983}.
But on the other hand $\A(h,h^{3/2}\sqrt{\ln h})$ does contain, say, quasi-random tournaments.
Indeed, by one of the equivalent notions of quasi-random tournaments proved by Chung and Graham \cite{Chung1991}, there is a quasi-random sequence of tournaments $\{H_h\}$ where $H_h$ has $h$ vertices such that
$H_h \in \A(h,h^{3/2}\sqrt{\ln h})$.

\begin{figure}[t]
	\includegraphics[scale=0.6,trim=20 200 160 45, clip]{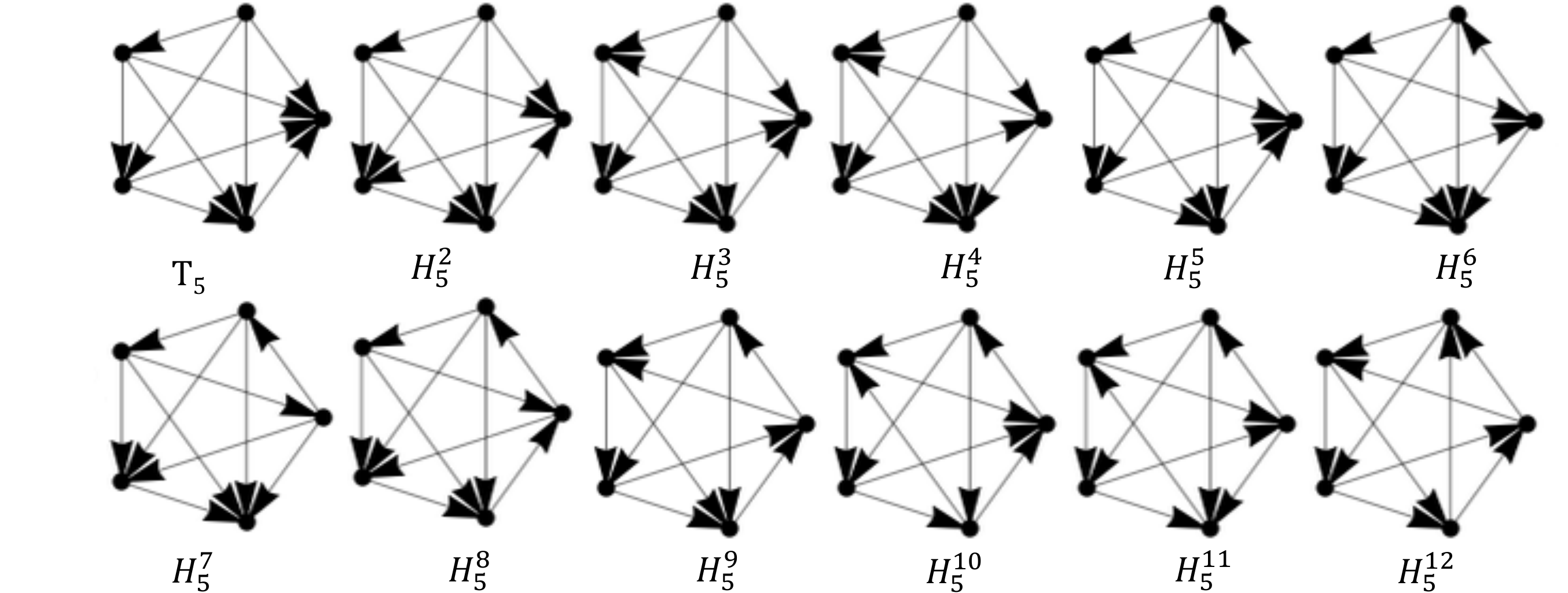}
	\caption{The tournaments on $5$ vertices.}
	\label{f:tour-5}
\end{figure}

\subsection{The bias subset}
Property 3 of Lemma \ref{l:bias-properties} states that we can partition $\T_h$ into two subsets: those tournaments $H$ for which $0$ is a local minimum of $B(H,x)$ and those for which $0$ is a local maximum of $B(H,x)$.
\begin{definition}
	The {\em bias subset} $\B_h \subset T_h$ consists of the tournaments $H \in \T_h$ for which $0$ is a local minimum of $B(H,x)$.
\end{definition}
For example, it is easy to verify that $B(T_3,x)=\frac{3}{4}+x^2$ while $B(C_3,x)=\frac{1}{4}-x^2$.
Hence, $\B_3 = \{T_3\}$.
The following is a corollary of Lemma \ref{l:concentration}.
\begin{proposition}\label{prop:concentration-bh}
$\B_h$ is dominant.
\end{proposition}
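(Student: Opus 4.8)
The plan is to show that $\B_h$ is contained in $\F(h,x_0)$ for a suitably small $x_0 \in (0,\tfrac12)$, and then to invoke Lemma \ref{l:concentration} together with the trivial observation that every subset of a dominant family is dominant: a tournament $G$ witnessing dominance for a family $\F$ (with parameter $\beta$) also witnesses it, with the same $\beta$, for any $\F' \subseteq \F$, since it satisfies strictly fewer inequalities.

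First I would fix $H \in \B_h$ and examine $B(H,x)$ near $x=0$. By Property 1 of Lemma \ref{l:bias-properties} the polynomial $B(H,x)$ is even, so $B(H,x)-d(H)$ contains only even-degree terms. It is not the zero polynomial: by Property 2 we have $B(H,\pm\tfrac12) \in \{0,1\}$, while $d(H)=B(H,0)$ lies strictly between $0$ and $1$. Hence there is a smallest integer $k_H \ge 1$ with
\[
B(H,x)-d(H) = c_H x^{2k_H} + (\text{higher-order terms}), \qquad c_H \neq 0,
\]
and by Property 3 of Lemma \ref{l:bias-properties} together with the definition of $\B_h$ (namely, $0$ is a local minimum of $B(H,x)$), we get $c_H > 0$. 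Consequently there is some $\varepsilon_H \in (0,\tfrac12)$ such that $B(H,x) > d(H)$ for all $x \in (0,\varepsilon_H)$.

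Then I would set $\varepsilon = \min_{H \in \B_h} \varepsilon_H$, which is positive because $\T_h$, and hence $\B_h$, is finite, and pick any $x_0 \in (0,\varepsilon)$. For this choice, $B(H,x_0) > d(H)$ for every $H \in \B_h$, i.e. $\B_h \subseteq \F(h,x_0)$. Since $x_0 \in (0,\tfrac12)$, Lemma \ref{l:concentration} shows $\F(h,x_0)$ is dominant, and therefore so is its subset $\B_h$. The only point needing care — and it is mild — is the passage from ``$0$ is a local minimum of $B(H,x)$'' to the strict inequality $B(H,x) > d(H)$ on a one-sided punctured neighbourhood of $0$; this is exactly what the argument above provides, using that $B(H,x)-d(H)$ is a nonzero polynomial. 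The requirement that a single $x_0$ serve all of $\B_h$ simultaneously, which might otherwise be an obstacle, is dispatched immediately by finiteness.
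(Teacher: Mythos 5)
Your proof is correct and follows essentially the same route as the paper: both arguments use the finiteness of $\B_h$ to produce a single $x_0\in(0,\tfrac12)$ with $\B_h\subseteq\F(h,x_0)$ and then invoke Lemma \ref{l:concentration}. The only cosmetic difference is that the paper extracts $x_0$ from an interval of monotonicity of each $B(H,x)$ near $0$, whereas you extract it from the positivity of the lowest-order coefficient of $B(H,x)-d(H)$.
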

\begin{proof}
	For each $H \in \B_h$, let $\alpha_H >0$ be the largest real such that $B(H,x)$ is monotone increasing in $(0,\alpha_H)$. Such an interval exists since $0$ is a local minimum of $B(H,x)$.
	Notice that if $H \neq T_H$ then it must be that $0 < \alpha_H \le \frac{1}{2}$ since by Lemma \ref{l:bias-properties},
	$B(H,\frac{1}{2})=0$ and $B(H,0)=d(H) > 0$. If $H=T_h$ then it may be that $\alpha_H > \frac{1}{2}$ (in fact, it
	may be infinity) so if this occurs, just redefine $\alpha_{T_H}=\frac{1}{2}$.
	Now define $\alpha_h = \min \{\frac{1}{2}\alpha_H\,|\, H \in \B_h\}$. As $\alpha_h$ is a minimum of a finite set of positive reals, each no larger than $\frac{1}{4}$, we have that $0 < \alpha_h \le \frac{1}{4}$.
	By Lemma \ref{l:concentration}, $\F(h,\alpha_h)$ is dominant. As $\B_h \subseteq \F(h,\alpha_h)$, the proposition follows.
\end{proof}

Note that $\B_h$ is explicitly constructed, as for each tournament $H$ one merely needs to compute
the bias polynomial $B(H,x)$ as given in (\ref{e:1}) and check whether the coefficient of the lowest order term of $B(H,x)-d(H)=B(H,x)-B(H,0)$ is positive. In Tables \ref{table:1} and \ref{table:2} we list the bias polynomials of $\T_4$ and $\T_5$ respectively.  In particular, we obtain that $\B_5=\{T_5,H_5^2,H_5^3,H_5^4,H_5^6,H_5^7\}$
which is half of the total of $12$ tournaments on $5$ vertices.
In Table \ref{table:3} we list for all $3 \le h \le 9$ the size of $\B_h$ and the ratio of
$\B_h$ and $\T_h$. In particular, we have that $|\B_9|=79229$ which constitutes more than $41\%$ of the total number of tournaments on $9$ vertices \footnote{Source code of our program is available at \url{https://www.dropbox.com/s/y9zovepfr1hg1nt/dominant-tour.zip?dl=0}}. The following conjecture, if true, will give a dominant subset that is at least an absolute constant fraction of $\T_h$.
\begin{conjecture}\label{conj:bh}
	There exists an absolute constant $c > 0$ such that for all $h \ge 3$, $|\B_h| \ge c|\T_h|$.
\end{conjecture}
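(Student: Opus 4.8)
The plan is to reduce the conjecture to an anticoncentration statement about the number of directed triangles in a random tournament, and then to settle that statement by a second–moment computation together with a central limit theorem. First I would isolate an explicit sufficient condition for membership in $\B_h$. For $H\in\T_h$ let $c(H)$ denote the number of $3$‑subsets of $V(H)$ inducing a directed triangle and put $t(H)=\binom h3-c(H)$. I claim that $\{H\in\T_h:\ c(H)<\tfrac14\binom h3\}\subseteq\B_h$. To see this, expand $\bigl(\tfrac12\pm x\bigr)^{\bullet}=2^{-\binom h2}(1\pm 2x)^{\bullet}$ in (\ref{e:1}): the constant term is $d(H)$, the linear term vanishes because reversing $\pi$ swaps $f(\pi)$ and $b(\pi)$, and one obtains $B(H,x)=d(H)+2d(H)\bigl(\tfrac1{h!}\sum_{\pi\in S_h}(f(\pi)-b(\pi))^2-\binom h2\bigr)x^2+O(x^4)$. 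Computing the pairwise correlations of the $\pm1$ edge‑orientation indicators under a uniform random $\pi$ (two edges sharing a vertex are correlated by $+\tfrac13$ if that vertex is a common source or common sink, by $-\tfrac13$ if it is the middle vertex of a directed path, and disjoint edges are uncorrelated) yields $\tfrac1{h!}\sum_\pi(f(\pi)-b(\pi))^2=\tfrac13\bigl(\binom h2+\sum_v(d^+(v)-d^-(v))^2\bigr)$, and since $\sum_v(d^+(v)-d^-(v))^2=8t(H)-h(h-1)(h-3)$ the coefficient of $x^2$ in $B(H,x)$ equals $\tfrac{16}{3} d(H)\bigl(\tfrac14\binom h3-c(H)\bigr)$. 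Thus $B(H,x)-d(H)$ has lowest‑degree term $x^2$ with a positive coefficient precisely when $c(H)<\tfrac14\binom h3$, so such $H$ lie in $\B_h$ by Lemma~\ref{l:bias-properties}(3) (and every $H$ with $c(H)>\tfrac14\binom h3$ has $0$ as a strict local maximum of $B(H,x)$ and is excluded).

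By this reduction it suffices to prove that a positive absolute fraction of $\T_h$ satisfies $c(H)<\tfrac14\binom h3$. The key observation is that $\tfrac14\binom h3$ is exactly $\mathbb{E}\,c(H)$ for a uniformly random labeled $h$‑vertex tournament (each triple is a directed triangle with probability $\tfrac14$), so we need $\Pr[c(H)<\mathbb{E}\,c(H)]$ to be bounded below. Since $c(H)$ is an affine image of $Y:=\sum_v(d^+(v)-d^-(v))^2$, it is enough to understand $Y$; a short computation gives $\mathbb{E}\,Y=h(h-1)$ and $\operatorname{Var}Y=2h(h-1)(h-2)=\Theta(h^3)$, the latter being clean because the individual squares $(d^+(v)-d^-(v))^2$ turn out to be pairwise uncorrelated.

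Now $Y$ is a sum over the $h$ vertices of the \emph{weakly dependent} terms $(d^+(v)-d^-(v))^2$ — any two of them involve disjoint edge sets apart from the single shared edge — and for such sums I would appeal to the martingale central limit theorem: revealing the edges of $H$ one at a time, the Doob martingale differences of $Y$ obey a Lyapunov condition (their fourth moments sum to $o((\operatorname{Var}Y)^2)$) and the predictable quadratic variation concentrates, giving $(Y-\mathbb{E}Y)/\sqrt{\operatorname{Var}Y}\Rightarrow N(0,1)$ and hence $\Pr[c(H)<\mathbb{E}\,c(H)]\to\tfrac12$. A more self‑contained alternative is to compute the third and fourth central moments of $Y$, verify that its skewness and kurtosis are $O(1)$, and invoke the elementary fact that bounded skewness and kurtosis force $\Pr[X\le\mathbb{E}X]$ to be bounded below; this suffices, since the atom $\Pr[c(H)=\mathbb{E}\,c(H)]$ is $o(1)$. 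For the finitely many small $h$ not covered, $T_h\in\B_h$ already gives $|\B_h|/|\T_h|>0$; and passing from uniformly random labeled tournaments to isomorphism classes costs only a $1+o(1)$ factor, since a $1-o(1)$ fraction of $h$‑vertex tournaments are asymmetric (so $|\T_h|=(1+o(1))2^{\binom h2}/h!$). Taking $c$ to be the minimum of the finitely many small‑$h$ ratios and, say, $\tfrac13$ then yields the conjecture; the boundary tournaments with $c(H)=\tfrac14\binom h3$, possible only when $4\mid\binom h3$, contribute nonnegatively to $|\B_h|$ and are discarded.

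The hard part will be the asymptotic normality (or even just the one‑sided anticoncentration) of $c(H)$. A degree‑$2$ Rademacher chaos need not be asymptotically normal — $(\sum_i\varepsilon_i)^2$ is a rescaled $\chi^2_1$ — so one cannot treat $Y$ as a generic quadratic form, and the argument must genuinely exploit that $Y$ is a sum of $h$ weakly dependent score‑squares: in the martingale approach the delicate point is the concentration of the predictable quadratic variation, and in the moment approach it is the control of the fourth‑moment error term. A secondary, but not entirely routine, issue is the careful transfer between labeled‑tournament probabilities and isomorphism‑class counts.
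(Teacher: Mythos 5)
The paper offers no proof of this statement---it is posed there as an open conjecture---so there is nothing to compare your argument against; I am evaluating it on its own. Your reduction is correct, and I checked the key identity against the paper's own data: the coefficient of $x^2$ in $B(H,x)$ is indeed $\tfrac{16}{3}d(H)\bigl(\tfrac14\binom h3-c(H)\bigr)$, which reproduces every entry of Tables \ref{table:1} and \ref{table:2} (e.g.\ $T_4$: $\tfrac{16}{3}\cdot\tfrac38\cdot 1=2$; $D$: $c=1=\tfrac14\binom 43$ gives coefficient $0$, matching $\tfrac18-2x^4$; the regular tournament $H_5^{12}$ with $c=5$ gives $-\tfrac5{16}$). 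Hence $\B_h$ contains every $H$ with $c(H)<\tfrac14\binom h3$ and, apart from the boundary $c(H)=\tfrac14\binom h3$, nothing else, so the conjecture reduces to showing that a uniformly random labeled tournament has fewer than the expected number $\tfrac14\binom h3$ of cyclic triangles with probability bounded away from $0$. That statement is true and in fact classical: asymptotic normality of the number of circular triads goes back to Moran (Biometrika, 1947), and your computations ($\operatorname{Var}Y=2h(h-1)(h-2)$, operator-norm-to-Frobenius-norm ratio $O(h^{-1/2})$ for the underlying degree-$2$ Rademacher chaos) are exactly the hypotheses that de Jong--type CLTs require. So the approach does settle the conjecture, and in fact yields $|\B_h|/|\T_h|\to\tfrac12$, which is consistent with (and explains) the empirical values $0.40$--$0.45$ in Table \ref{table:3}.

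Two steps still need to be written out before this is a proof rather than a program. First, the anticoncentration is only sketched: you should either cite the classical result or carry out the fourth-moment computation ($\mathbb{E}[(Y-\mathbb{E}Y)^4]=(3+o(1))(\operatorname{Var}Y)^2$, where the non-pairing contributions are controlled by $\operatorname{tr}(A^4)\le\|A\|_{op}^2\|A\|_F^2=O(h^5)=o(h^6)$). Second, your ``elementary alternative'' quietly relies on $\Pr[c(H)=\tfrac14\binom h3]=o(1)$, which is itself an unproved anticoncentration claim for a quadratic form; it is cleaner to avoid the atom entirely by deducing directly from the bounded normalized third and fourth moments that $\Pr[c(H)\le\mathbb{E}c(H)-\epsilon\sigma]\ge\delta>0$ for absolute constants $\epsilon,\delta$ (a Paley--Zygmund-type argument via $\mathbb{E}|Z|\ge(\mathbb{E}Z^2)^2/\mathbb{E}|Z|^3$). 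Neither issue is a flaw in the approach; both are routine to repair, and the transfer from labeled tournaments to isomorphism classes is handled correctly.
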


\begin{table}[t]
	\begin{center}
		\renewcommand{\arraystretch}{1.5}
		\begin{tabular}{|c|c|c|}
			\hline 
			tournament & bias polynomial & in $\B_h$ \\ 
			\hline 
			$T_5$ & $\frac{15}{128}+\frac{25}{16}x^2+6x^4+7x^6+2x^8$ & $\surd$\\ 
			\hline 
			$H_5^2$ &  $\frac{5}{128}+\frac{5}{16}x^2-\frac{1}{2}x^4-5x^6-2x^8$ &  $\surd$ \\ 
			\hline 
			$H_5^3$ &  $\frac{15}{128}+\frac{5}{16}x^2   -4x^4   +3x^6+2x^8  $ &  $\surd$\\ 
			\hline 
			$H_5^4$ &  $\frac{5}{128}+\frac{5}{16}x^2-\frac{1}{2}x^4-5x^6-2x^8$ & $\surd$\\ 
			\hline 
			$H_5^5$ &  $\frac{15}{128}-\frac{5}{16} x^2   +\frac{1}{2}x^4   -3x^6-6x^8 $ & \\ 
			\hline 
			$H_5^6$ &  $\frac{15}{128}+\frac{5}{16}x^2-4x^4+3x^6+2x^8$ & $\surd$\\ 
			\hline 
			$H_5^7$ &  $\frac{5}{128}+\frac{5}{16}x^2-\frac{1}{2}x^4-5x^6-2x^8$ & $\surd$\\ 
			\hline 
			$H_5^8$ &  $\frac{15}{128}-\frac{5}{16}x^2   -\frac{5}{2}x^4   +5x^6+10x^8   $ & \\ 
			\hline 
			$H_5^9$ &  $\frac{15}{128}-\frac{15}{16}x^2   +2x^4   -x^6+2x^8   $ & \\ 
			\hline 
			$H_5^{10}$ &  $\frac{5}{128}-\frac{5}{16}x^2   +x^4   -3x^6+6x^8  $ & \\ 
			\hline 
			$H_5^{11}$ &  $\frac{15}{128}-\frac{15}{16}x^2   +x^4   +7x^6-14x^8   $ & \\ 
			\hline 
			$H_5^{12}$ &  $\frac{3}{128}-\frac{5}{16}x^2   +\frac{3}{2}x^4   -3x^6+2x^8   $ & \\ 
			\hline 
		\end{tabular}
	\end{center}
	\caption{Tournaments on five vertices and their bias polynomials.}\label{table:2}
\end{table} 

\begin{table}[ht]
	\begin{center}
		\renewcommand{\arraystretch}{1.5}
		\begin{tabular}{|c||c|c|c|c|c|c|c|}
			\hline
			$h$ & $3$ & $4$ & $5$ & $6$ & $7$ & $8$ & $9$ \\
			\hline
			$|\T_h|$ & $2$ & $4$ & $12$ & $56$ & $456$ & $6880$ & $191536$ \\
			\hline
			$|\B_h|$ & $1$ & $1$ & $6$ & $25$ & $199$ & $2769$ & $79229$ \\
			\hline
			$|\B_h|/|\T_h|$ & $0.5 $ & $0.25$ & $0.5$ & $0.446...$ & $0.436...$ & $0.402...$ & $0.413...$ \\ 
			\hline 
		\end{tabular}
\end{center}
\caption{The sizes of $\T_h$ and $\B_h$ and their ratio, for small $h$.}\label{table:3}
\end{table}

\section{Tournaments with a common subgraph}\label{sec:H*}
	
For a tournament $H^*$ with at most $h$ vertices,
let $\T_h(H^*)$ denote the set of all elements of $\T_h$ that contain $H^*$ as a sub-tournament.
Our main result in this section follows.
\begin{theorem}\label{t:sub}
	For all sufficiently large $h$, if $H^*$ contain at least $5\log h$ vertices then $\T_h(H^*)$ is dominant.
\end{theorem}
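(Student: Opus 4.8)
My plan begins with a reduction: since $\T_h(H^*)\subseteq\T_h(H^{**})$ whenever $H^{**}$ is an induced sub-tournament of $H^*$, and since every subfamily of a dominant family is dominant (the same witness $G$ works), it suffices to treat the case where $H^*$ has \emph{exactly} $h^*:=\lceil 5\log h\rceil$ vertices. The next observation is that $\T_h(H^*)$ is a very thin family: the probability that a uniform random $h$-vertex tournament contains a copy of $H^*$ is at most $\binom h{h^*}d(H^*)\le h^{h^*}h^*!\,2^{-\binom{h^*}2}$, and since $h^*\ge 5\log h$ this is $2^{-(7-o(1))(\log h)^2}=o(1)$; hence $\sum_{H\in\T_h(H^*)}d(H)=o(1)$, and indeed a planted copy of $H^*$ inside an $h$-vertex tournament is overwhelmingly atypical because $d(H^*)\le 2^{-(12.5-o(1))(\log h)^2}$ is far below $\binom h{h^*}^{-1}\ge h^{-h^*}=2^{-5(\log h)^2}$. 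Finally I would note that $h^*\ge 7$, so $d_{\max}(H^*)>d(H^*)$ and (trivially) $d_{H^*}(H^*)=1\gg d(H^*)$, so copies of $H^*$ can be made far denser than typical; this slack is what the argument must exploit.

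The goal is then to produce, for each large $n$, an $n$-vertex tournament $G$ that is close enough to a uniform random tournament that every $h$-vertex $H$ still occurs with density essentially $d(H)$, while carrying enough extra $H^*$-substructure that every $H\supseteq H^*$ gains a fixed constant-factor surplus. I would set this up probabilistically, with $G$ drawn from a distribution that ``plants'' $H^*$-type structure on the vertex set (for instance by independently marking vertices with random vertices of $H^*$ and letting edges between distinctly marked vertices follow $H^*$ while all other edges are fair coins), compute $\mathbb E[c_H(G)]$ by separating the contribution of $h$-sets that realize a planted $H^*$-copy from the generic contribution — the generic edges reproduce exactly the typical probability $d(H)$, so the planted part is a clean surplus — and then pass from $\mathbb E[c_H(G)]\ge(1+\beta)d(H)\binom nh$ (with a single $\beta>0$ good for the whole family) to one deterministic $G$ by the second-moment argument used in Lemma~\ref{l:concentration}: disjoint $h$-sets contribute independently, so $\mathrm{Var}(c_H(G))=O(n^{2h-1})$, and $n\gg|\T_h(H^*)|$. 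It is worth flagging that the much simpler route through Lemma~\ref{l:concentration} directly is unavailable: a tournament obtained by planting $H^*$ inside an otherwise quasi-random $h$-vertex tournament lies in $\T_h(H^*)$ but is near-regular with near-maximum minimum feedback arc set, so its bias polynomial has $B(H,x)<d(H)$ for all $x\in(0,\tfrac12]$; hence $\T_h(H^*)\not\subseteq\F(h,x)$ for any $x$, and a genuinely non-$T(n,p)$ construction is needed.

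The hard part — the step I expect to carry the real content — is calibrating the planting so that the surplus is uniformly positive over \emph{all} of $\T_h(H^*)$, in particular for the near-regular members that no biased-coin model can boost, and checking that planting $H^*$-structure does not inadvertently depress the many non-special edges of such an $H$ below their typical frequency. This is exactly where $h^*\ge 5\log h$ is needed quantitatively: it makes $d(H^*)$ astronomically small relative to $\binom h{h^*}^{-1}$, so that a planted $H^*$-copy genuinely tilts the count, whereas with a constant smaller than $5$ these two quantities become comparable and the surplus disappears — consistent with the remark that the bound cannot be improved much (already for $H^*=T_{\lceil\log h\rceil}$ one has $\T_h(H^*)=\T_h$, which is not dominant). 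I would also check the easy regime where $h^*$ is close to $h$, in which $\T_h(H^*)$ degenerates to $\{H^*\}$ together with a bounded number of small extensions and the claim follows from $d_{\max}>d$ for tournaments on at least seven vertices.
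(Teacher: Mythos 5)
Your overall strategy --- plant a blown-up copy of $H^*$ on part of the vertex set, orient everything else at random, and pass to a single witness $G$ via the second-moment argument of Lemma~\ref{l:concentration} --- is the right one, and is essentially the paper's. But two things stand between the plan and a proof. First, the one concrete construction you offer (independently mark vertices with random vertices of $H^*$ and force all edges between distinctly marked vertices to follow $H^*$) fails if every vertex receives a mark: for $H\in\T_h(H^*)$, the $h-k$ vertices of $H$ lying outside its $H^*$-copy must be embedded somewhere where \emph{all} of their incident edges --- to each other and to the $k$ planted classes --- are fair coins, and in an everywhere-marked construction no such vertices exist (a vertex in class $j$ has its edges to every other class forced by $H^*$, so the forced pattern almost never agrees with $H$ and the planted contribution collapses). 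The fix is to reserve a large unmarked class whose internal and outgoing edges are all random (in the paper, $V_{k+1}$ of size $n-kn/h$, with the $k$ planted classes of size $n/h$ each). Relatedly, your framing that ``the generic edges reproduce exactly $d(H)$, so the planted part is a clean surplus'' is not what happens at the natural parameter choice: with classes of size $n/h$ the purely random contribution to ${\mathbb E}[d_H(G)]$ is only about $(1-k/h)^h d(H)=h^{-\Theta(1)}d(H)$, so the planted term must exceed $(1+\beta)d(H)$ entirely on its own.

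Second, and more importantly, the step you defer as ``the hard part'' is the entire quantitative content of the theorem, and your heuristic does not substitute for it. What must be shown is that, for a uniformly random injection $f:[h]\to[n]$,
$$
\Pr[f\ \text{good}]\cdot\Pr\bigl[f([h])\ \text{induces}\ H \mid f\ \text{good}\bigr]\ \ge\ \frac{1}{(eh)^k}\cdot\frac{(h-k)!}{|Q|}\,2^{\binom{k}{2}-\binom{h}{2}}\ \ge\ d(H)\cdot\frac{2^{\binom{k}{2}}}{(eh^2)^k}\ >\ (1+\beta)d(H),
$$
where $Q$ is the subgroup of $Aut(H)$ fixing the planted $[k]$ pointwise (one uses $|Q|\le aut(H)$ and $(h-k)!\ge h!/h^k$); the last inequality reduces to $2^{(k-1)/2}>eh^2$, i.e.\ $k\gtrsim 4\log h$, and this is precisely where $k\ge 5\log h$ enters. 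Your proposed calibration --- comparing $d(H^*)\approx 2^{-k^2/2}$ with $\binom{h}{k}^{-1}\approx 2^{-k\log h}$ --- balances at $k\approx 2\log h$ and so would incorrectly suggest the construction already works just above that; it measures how rare $H^*$-copies are in a random $h$-vertex tournament, not the cost $(eh^2)^{-k}$ of hitting the planted classes with an embedding of $H$, which is what the surplus actually has to beat. (Your opening reduction to $|V(H^*)|=\lceil 5\log h\rceil$ is valid but unnecessary, since the estimate above goes through for every $k<h$.)
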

\begin{proof}
We assume that $h$ is sufficiently large and that the number of vertices of $H^*$
is $k$ where $h > k \ge 5\log h$.
We define a probability space of $n$-vertex tournaments (hereafter we assume that $n$ is a multiple of $h$, as this assumption does not affect the theorem's statement).
Assume that the vertices of $H^*$ are labeled with  $[k]$.
Consider vertex set $[n]$ partitioned into $k+1$ subsets $V_1,\ldots,V_{k+1}$.
For $i=1,\ldots,k$, set $V_i$ has $n/h$ vertices and set $V_{k+1}$ consists of the remaining $n-kn/h$ vertices.
For all $1 \le i < j \le k$, the edges between $V_i$ and $V_j$ are all directed from $V_i$ to $V_j$ if
$(i,j) \in E(H^*)$ or all directed from $V_j$ to $V_i$ if $(j,i) \in E(H^*)$.
Observe that each transversal of $V_1,\ldots,V_k$ induces a copy of $H^*$.
The remaining edges, which in particular include the edges having at least one endpoint in $V_{k+1}$, are oriented
randomly, uniformly and independently. Denote the resulting probability space by $T(n,h,H^*)$.
We prove that for a small positive $\beta=\beta(h)$ it holds that
for each $H \in \T_h(H^*)$, its expected density in $G \sim T(n,h,H^*)$ is at least $(1+\beta)d(H)$.
By the second moment method, exactly as in the proof of Lemma \ref{l:concentration}, this implies that
$\T_h(H^*)$ is dominant.

Let, therefore, $H \in \T_h(H^*)$ be labeled with vertex set $[h]$
such that the sub-tournament of $H$ induced by $[k]$ is label-isomorphic to $H^*$.
Recall that $d(H) = h!2^{-\binom{h}{2}}/aut(H)$.
Let $P$ denote the set of all $(h-k)!$ permutations of $[h]$ that are stationary on $[k]$,
let $Aut(H)$ denote the automorphism group of $H$ 
and let $Q \le Aut(H) \cap P$ be the sub-group of $Aut(H)$ consisting of the
permutations of $[h]$ that are stationary on $[k]$. Observe that $1 \le |Q| \le aut(H)$.

Suppose now that $G \sim T(n,h,H^*)$.
Consider a random injection $f$ from $[h]$ to $[n]$. We call $f$ {\em good} if
$f(i) \in V_i$ for $i=1,\ldots,k$ and $f(i) \in V_{k+1}$ for $i=k+1,\ldots,h$.
By the sizes of the $V_i$'s we have that $f$ is good with probability
$$
\frac{1}{h^k} \Pi_{i=k+1}^h\left(\frac{n-kn/h-i+k+1}{n-i+1}\right) \ge \frac{1}{(eh)^k}\;.
$$
Given that $f$ is good, the probability that its image induces a copy of $H$ is
$$
\frac{(h-k)!}{|Q|} 2^{\binom{k}{2}-\binom{h}{2}}
$$
since $\binom{h}{2}-\binom{k}{2}$ is the number of edges with an endpoint in $V_{k+1}$.
Hence, ${\mathbb E}[d_H(G)]$ (the expectation of $d_H(G)$) satisfies
\begin{align*}
{\mathbb E}[d_H(G)] & \ge \frac{1}{(eh)^k} \boldsymbol{\cdot} \frac{(h-k)!}{|Q|} 2^{\binom{k}{2}-\binom{h}{2}}\\
& \ge \frac{1}{(eh^2)^k} \boldsymbol{\cdot} \frac{h!2^{-\binom{h}{2}}}{aut(H)} 2^{\binom{k}{2}}\\
& = d(H) \frac{1}{(eh^2)^k} 2^{\binom{k}{2}}\\
\end{align*}
So, to prove the existence of $\beta=\beta(h)$ it suffices to prove that
$2^{(k-1)/2} > eh^2$. Indeed this holds as $k \ge 5 \log h$ and because $h$ is sufficiently large.
\end{proof}

\section{Concluding remarks and some open problems}\label{sec:concluding}
	
We list a few open problems and conjectures concerning dominant families.
An $h$-vertex tournament $H$ is {\em highly dominant} if every maximal dominant subset of $\T_h$ contains $H$.
The proposition shows that there are highly dominant tournaments.
\begin{proposition}\label{prop:1}
$T_h$ is highly dominant for all $h \ge 3$.	
\end{proposition}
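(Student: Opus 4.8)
The plan is to show that $T_h$ lies in \emph{every} dominant family, by showing that it cannot be excluded: more precisely, I want to argue that whenever a set $\F \subseteq \T_h$ is dominant, so is $\F \cup \{T_h\}$, which immediately forces every maximal dominant subset to contain $T_h$. The engine for this is the observation, already implicit in Proposition \ref{prop:concentration-bh}, that $T_h$ behaves monotonically well under the biased model $T(n,p)$: since $B(T_h,x)=d(T_h,x+\tfrac12)$ has $0$ as a local minimum (indeed $d_{T_h}(G)$ is maximized by transitive-like $G$), pushing $p$ slightly away from $\tfrac12$ only increases the expected density of $T_h$. So the biased constructions are ``friendly'' to $T_h$ for free.

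\medskip\noindent
The key steps, in order. First, suppose $\F$ is dominant with witness $\beta>0$; for all large $n$ there is an $n$-vertex $G_n$ with $d_H(G_n)\ge(1+\beta)d(H)$ for all $H\in\F$. I would like to perturb $G_n$ into a $G_n'$ that still satisfies these inequalities (with a slightly smaller constant) and in addition satisfies $d_{T_h}(G_n')\ge(1+\beta')d(T_h)$ for some $\beta'>0$. The natural perturbation is to blend $G_n$ with a biased tournament: take a small $\epsilon>0$, and for each ordered pair of vertices re-randomize its orientation with probability $\epsilon$, using the biased rule of $T(n,\tfrac12+\delta)$ for a fixed small $\delta$, while keeping the orientation of $G_n$ with probability $1-\epsilon$. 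For each fixed $h$-set, the induced sub-tournament equals the prescribed $H$ with probability at least $(1-\epsilon)^{\binom h2}$ times the original indicator, so the expected density of each $H\in\F$ drops by at most a factor $(1-\epsilon)^{\binom h2}$, which for $\epsilon$ small enough is still well above $1$. At the same time, by Property 3 of Lemma \ref{l:bias-properties} the transitive contribution is boosted: with probability $\epsilon^{\binom h2}$ all edges of the $h$-set are re-randomized biased, contributing an expected $B(T_h,\delta)>d(T_h)$ term, and the cross terms also favor $T_h$ since in the biased model every ordered $h$-tuple is more likely to be ``forward-heavy.'' A second-moment argument exactly as in Lemma \ref{l:concentration} then extracts a single tournament $G_n'$ realizing all these density lower bounds simultaneously, so $\F\cup\{T_h\}$ is dominant.

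\medskip\noindent
Actually, a cleaner route avoids the blending calculation: argue directly that $T_h$ is compatible with \emph{any} dominant construction because the event ``the $h$-set induces $T_h$'' is positively correlated with forward-bias in a way that never hurts. But the blending argument is the most robust, so I would carry that one out. Either way, once $\F\cup\{T_h\}$ is dominant whenever $\F$ is, maximality of a dominant $\F$ forces $T_h\in\F$, which is the definition of $T_h$ being highly dominant; and $d_{max}(T_h)=1>d(T_h)$ for $h\ge 3$ guarantees $\{T_h\}$ itself is dominant so the statement is non-vacuous.

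\medskip\noindent
The main obstacle I anticipate is making the ``blend'' perturbation interact correctly with the second-moment method: the re-randomized model $G_n'$ is no longer a product measure in the clean way $T(n,p)$ is, because the orientations that are \emph{kept} from $G_n$ are deterministic and arbitrary. I need the variance of $c_H(G_n')$ to remain $O(n^{2h-1})$, which holds because indicator variables for disjoint $h$-sets depend on disjoint edge sets and the re-randomization is done independently per edge — so independence across disjoint $h$-sets survives. The slightly delicate point is verifying that $\mathbb{E}[d_{T_h}(G_n')]$ genuinely exceeds $d(T_h)$ by a constant rather than merely matching it; this is where Property 3 of Lemma \ref{l:bias-properties} (the coefficient of the lowest-order term of $B(T_h,x)-d(T_h)$ is positive, visible from Table \ref{table:1}/\ref{table:2}) must be invoked quantitatively, uniformly over the mix of kept-vs-rerandomized edges. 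Once that lower bound is pinned down, the rest is the routine second-moment deduction already spelled out in the proof of Lemma \ref{l:concentration}.
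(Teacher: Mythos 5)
Your reduction is the right one and is the same as the paper's: it suffices to show that whenever $\F$ is dominant, so is $\F\cup\{T_h\}$, since maximality then forces $T_h\in\F$. But your execution of that step has a genuine gap, and it sits exactly where the whole difficulty of the proposition lives: the claim that the blended tournament $G_n'$ satisfies $\mathbb{E}[d_{T_h}(G_n')]\ge(1+\beta')d(T_h)$. Property 3 of Lemma \ref{l:bias-properties} concerns only the pure product model $T(n,p)$; it says nothing about a mixture of an \emph{arbitrary} tournament $G_n$ with a biased re-randomization. For an individual $h$-set whose kept sub-tournament is non-transitive, the probability of becoming transitive after re-randomizing each edge with probability $\epsilon$ is $O(\epsilon)$ (at least one edge must be flipped), which for small $\epsilon$ is far below $d(T_h)$; so the assertion that ``the cross terms also favor $T_h$'' is false pointwise and unproven in aggregate. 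Writing out the expectation, one gets roughly $(1-O(\epsilon))\,d_{T_h}(G_n)$ plus gain terms from non-transitive sets that happen to be repaired by flips, and since you cannot rule out that $d_{T_h}(G_n)=(1+o(1))d(T_h)$ --- ruling that out is precisely the missing ingredient --- there is no reason the gains produce a \emph{constant-factor} excess over $d(T_h)$; the $\epsilon^{\binom{h}{2}}B(T_h,\delta)$ contribution you isolate is far too small to do so on its own. (The second-moment part of your argument is fine; the problem is the expectation lower bound.)

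The paper avoids any perturbation by quoting a stability fact: $T_h$ is quasi-random forcing (Exercise 10.44(b) of \cite{Lovasz2007} together with \cite{Coregliano2017}), i.e.\ $d_{T_h}(G_n)\to d(T_h)$ forces $\{G_n\}$ to be quasi-random. The witnessing sequence $\{G_n\}$ for $\F$ over-represents some fixed $H\in\F$ by a factor $1+\beta$, so it violates the Chung--Graham property $P_1(h)$ and is not quasi-random; hence $d_{T_h}(G_n)\ge(1+\epsilon)d(T_h)$ for all large $n$, and the \emph{same} tournaments $G_n$ witness dominance of $\F\cup\{T_h\}$ with no modification at all. To repair your blending argument you would in any case need to import this forcing/stability statement to handle the case where $d_{T_h}(G_n)$ is close to $d(T_h)$, at which point the blend becomes superfluous; I recommend restructuring the proof around that citation.
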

\begin{proof}
	Let $\F \subset \T_h$ be dominant. Hence, there exists $\beta=\beta(\F)$ and $n_0 \in \mathbb{N}$ such that for all $n \ge n_0$, there exists a tournament $G$ with $n$ vertices such that 
	$d_H(G) \ge (1+\beta)d(H)$ for each $H \in \F$.
	For $n \ge n_0$ let $G_n$ be tournament satisfying $d_H(G_n) \ge (1+\beta)d(H)$ for each $H \in \F$.
	Fix some $H \in \F$. As $d_H(G_n) \ge (1+\beta)d(H)$ for all $n \ge n_0$, it follows from the result of Chung and Graham \cite{Chung1991} that $\{G_n\}$ is {\em not} a quasi-random sequence, as it violates property
	$P_1(h)$ there. But on the other hand, it follows from exercise 10.44(b) of \cite{Lovasz2007} and also from  \cite{Coregliano2017} that $T_h$ is quasi-random forcing, implying that for our sequence, there exists
	$\epsilon > 0$ and $n_1 \ge n_0$ such that for all $n \ge n_1$, $d_{T_h}(G_n)  \ge (1+\epsilon)d(T_h)$.
	This implies that $\{T_h\} \cup \F$ is dominant.
\end{proof}
\begin{problem}
	Determine all highly dominant tournaments. In particular, are there non-transitive highly dominant tournaments?
\end{problem}

It is very easy to show that for every positive integer $k \ge 2$, there is a minimum integer $f(k)$ such that for all $h \ge f(k)$, every $k$-subset of $\T_h$ is dominant.
The following proposition gives an upper bound for $f(k)$.
\begin{proposition}
	$f(k) \le (1+o_k(1))\log k$.
\end{proposition}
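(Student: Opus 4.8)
The plan is to show that a random tournament already works, provided we allow a tiny advantage over the typical density. The key point is that if $h$ is large compared to $\log k$, then any fixed collection of $k$ tournaments on $h$ vertices will \emph{simultaneously} appear with density slightly above typical in some single $n$-vertex tournament. To make this quantitative, I would take $h = (1+o_k(1))\log k$ and set $x = x(h)$ to be a slowly vanishing bias (for instance $x = c/\sqrt h$ for a suitable constant); the goal is to prove that for such $x$, every $h$-vertex tournament $H$ satisfies $B(H,x) > d(H)$, i.e. $\F(h,x) = \T_h$, so that in particular every $k$-subset lies inside the dominant family $\F(h,x)$ by Lemma~\ref{l:concentration}. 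Actually this is too strong in general (some tournaments have $0$ as a strict local \emph{maximum}), so the real strategy must be different: instead of forcing \emph{all} tournaments up at a single $x$, I would exploit that a $k$-subset is small relative to $\T_h$ and directly build one tournament $G$ serving all $k$ of them.

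Concretely, I would proceed as follows. First, recall Stearns' bound: every $h$-vertex tournament contains $T_{\lceil \log h\rceil}$ as a subtournament, and more usefully, there are only $|\T_h| \le 2^{\binom h2}$ tournaments on $h$ vertices, each with $d(H) = h!\,2^{-\binom h2}/aut(H) \ge h!\,2^{-\binom h2}$. Second, given a $k$-subset $\F = \{H_1,\dots,H_k\}$, I want to show $\F$ is dominant whenever $h \ge (1+o_k(1))\log k$. The cleanest route: use a random tournament $G \sim T(n,\tfrac12+x)$ with $x$ tiny and positive, and show that for \emph{each} $H_i$ we have $B(H_i,x) \ge (1+\beta) d(H_i)$ for some $\beta > 0$ — but this fails for the ``local maximum'' tournaments. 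So instead I would use the approach of Theorem~\ref{t:sub} / the blow-up construction: each $H_i$ contains $T_{\lceil \log h\rceil}$ (by Stearns), and since $h \ge (1+o_k(1))\log k$ means $\lceil \log h\rceil$ is roughly $\log\log k$... that's not enough leverage either.

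Let me instead describe the approach I actually expect the paper to take, which is the honest one: reduce to a single common subtournament. If $h$ is large enough that every $h$-vertex tournament contains some \emph{fixed} tournament $H^*$ on $\lceil 5\log h\rceil$ vertices — which would follow if there is a tournament $H^*$ on that many vertices that is ``$h$-unavoidable'' — then Theorem~\ref{t:sub} gives that $\T_h(H^*) = \T_h \supseteq \F$ is dominant, and in fact \emph{every} subset is dominant, which would give $f(k)$ bounded independent of $k$. That is false, so the dependence on $k$ must enter through the union bound in the second moment argument. The correct plan: take $h$ with $2^{\binom h2}$ just slightly exceeding... no. The genuinely right idea is this: order $\T_h$ and observe that for a random tournament the densities $d_{H}(G)$ concentrate; a $k$-subset $\F$ is dominant iff some $G$ beats typical density on all of $\F$ simultaneously, and by a counting/averaging argument over a well-chosen family of $G$'s (e.g.\ transitive blow-ups of small tournaments, which realize a rich set of density vectors), one needs the number of ``constraints'' $k$ to be at most roughly $2^{h}$ or so, giving $h \gtrsim \log k$. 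I would make this precise by picking, for each $H \in \F$, a single tournament $G_H$ with $d_H(G_H) \ge (1+\beta)d(H)$ (exists since $H$ is not one of the sporadic $d_{max}=d$ cases once $h$ is large, by \cite{Bucic2019}), then taking $G$ to be a random ``mixture'' — specifically, let $G$ be an $n$-vertex tournament built by partitioning $[n]$ into $k$ equal blocks, each inducing a copy of the relevant $G_{H_i}$ on $n/k$ vertices, with edges between blocks random; a copy of $H_i$ lands inside block $i$ with probability $\sim k^{-h}$, contributing density boost $\sim k^{-h}(1+\beta)d(H_i) \cdot (\text{stuff})$, which beats the ``dilution'' loss precisely when $h \le (1+o_k(1))\log k$ forces $k^{-h}$ to not be too small — wait, that's backwards in $h$.

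I think I am overcomplicating this and should present the plan at the level of the key mechanism. The hard part — and the crux of the whole proposition — is getting the \emph{right} single tournament to serve all $k$ members of $\F$ simultaneously while losing only a $\mathrm{poly}(1/k)$ factor per member, which one can afford since $d(H) \ge h!\,2^{-\binom h2}$ and with $h \approx \log k$ we have $2^{-\binom h2} \approx k^{-\Theta(\log k)}$, comfortably larger than any fixed negative power of $k$ only when... Here is the clean version: take $h$ minimal with $\binom{h}{2} \ge (1+\epsilon)\log k$; then $2^{-\binom h2} \le k^{-(1+\epsilon)}$. For each $H_i \in \F$ pick $G_i$ witnessing dominance and form $G = G_1 \boxtimes G_2 \boxtimes \cdots$ appropriately so that each $H_i$ still appears with density $\ge c\, d(H_i)$ for a \emph{uniform} constant $c$, using that the number of embeddings only drops by a factor depending on $k$ and $h$ that is subsumed. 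I would verify: (1) existence of each $G_i$ via \cite{Bucic2019} for $h \ge 7$; (2) the blow-up/mixture construction preserving each density up to a factor that is $1$ in the limit $n \to \infty$ after a single $k$-dependent but $n$-independent loss; (3) that $h = (1+o_k(1))\log k$ suffices for this loss to be absorbed, via $\binom h2 \sim \tfrac12 (\log k)^2 \gg \log k$. The main obstacle is step (2): arranging the blow-up so that copies of $H_i$ inside the $i$-th part are not swamped by the exponentially many cross-part copies with near-random density, which I would handle by making the parts unequal in size — the $i$-th part large enough that $\binom{|V_i|}{h} \gg \binom{n}{h} \cdot (\text{bad copies})$ — or, more simply, by noting that we only need the \emph{sum} over parts to beat typical, and typical density of $H_i$ in the cross-part region is \emph{exactly} $d(H_i)$ by the same computation as in Lemma~\ref{l:concentration}, so any strictly-above-typical contribution from inside part $i$ tips the balance.
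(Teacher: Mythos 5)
There is a genuine gap. Your write-up cycles through several candidate strategies without completing any of them, and the one you finally settle on --- partition $[n]$ into $k$ equal blocks, put a witness $G_{H_i}$ with $d_{H_i}(G_{H_i})\ge(1+\beta)d(H_i)$ inside block $i$, and orient cross-block edges at random --- fails at exactly the step you flag as the obstacle. Your proposed resolution, that ``typical density of $H_i$ in the cross-part region is exactly $d(H_i)$,'' is false: an $h$-set with three or more vertices in a common block $j$ induces on those vertices a subtournament distributed according to $G_{H_j}$, not uniformly, so such sets can contribute density strictly below $d(H_i)$. These sets form a $\Theta(h^3/k^2)$ fraction of all $h$-sets, whereas the gain from the all-inside-block-$i$ sets is only of order $k^{-h}\beta\,d(H_i)$; for $h\ge 3$ the potential loss swamps the gain, so ``any strictly-above-typical contribution from inside part $i$ tips the balance'' does not follow. (The alternative fix of making the parts unequal runs into the same problem, since enlarging part $j$ to help $H_j$ worsens the non-random contamination for every other $H_i$. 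You also at one point replace the hypothesis $h\ge(1+\epsilon)\log k$ by $\binom h2\ge(1+\epsilon)\log k$, which is a much weaker demand on $h$ and cannot be what the argument uses.)

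The idea you are missing is that one should not try to beat the typical density by a delicate comparison against the ``diluted'' copies at all. The paper instead partitions $[n]$ into $r=h\lceil\sqrt{hk}\rceil$ classes of equal size, finds (via the Hajnal--Szemer\'edi theorem) $k$ pairwise edge-disjoint copies of $K_h$ on the $r$ class-indices, and encodes $H_i$ by orienting all edges between the classes of the $i$-th clique according to $H_i$; all remaining edges are oriented arbitrarily. Every transversal of the $h$ classes assigned to $H_i$ induces a copy of $H_i$, so $d_{H_i}(G)\ge h!/r^h$ with no further analysis, and this \emph{alone} already exceeds $d(H_i)\le h!\,2^{-\binom h2}$ provided $r^h<2^{\binom h2}$, i.e. $2r^2<2^h$ --- which is precisely where $h\ge(1+\epsilon)\log k$ enters, since $r^2\approx h^3k$. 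The edge-disjointness of the $k$ cliques is what lets a single $G$ serve all $k$ tournaments simultaneously without any interaction between them. Your proposal never reaches this counting mechanism, and without it the construction you describe does not close.
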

\begin{proof}
	Fix $\epsilon > 0$ and assume throughout the proof that $k$ is sufficiently large. Let
	$h \ge (1+\epsilon)\log k$. Let $r=h\lceil \sqrt{hk} \rceil$. Consider a complete graph $M$ on $r$ vertices.
	Take $r/h$ pairwise vertex-disjoint copies of $K_h$ (namely, a $K_h$-factor of $M$), remove the edges of this factor
	from $M$ and repeat taking factors. After taking $t$ factors we have already taken $tr/h$ pairwise edge-disjoint copies of $K_h$
	and the spanning subgraph of $M$ consisting of the edges not yet taken is regular of degree
	$r-1-t(h-1)$. By the Hajnal-Szemer\'edi Theorem \cite{Hajnal1970} we can do so as long as
	$r-1-t(h-1) \ge r-r/h$ so we can have $t \ge r/h^2$.  Thus, we can find in $M$ at least $r^2/h^3 \ge k$
	pairwise edge-disjoint copies of $K_h$.
	Now suppose that the vertices of $M$ are $[r]$ and that a set of $k$ pairwise edge-disjoint copies of $K_h$
	in $M$ is $\R=\{X_1,\ldots,X_k\}$ and $V(X_i)=\{x_{i,1},\ldots,x_{i,h}\}$.
	
	Now suppose that $\F=\{H_1,\ldots,H_k\} \subset \T_h$. We must prove that $\F$ is dominant.
	We assume that the vertices of each $H_i$ are labeled with $[h]$.
	Suppose that $n$ is an integer multiple of $r$. Consider vertex sets $V_1,\ldots,V_r$
	each of size $n/r$. We construct a random tournament with $n$ vertices as follows.
	For each $i=1,\ldots,k$, and for each pair $j,j'$ of distinct indices from $[h]$, we orient all
	edges from $V_{x_{i,j}}$ to $V_{x_{i,j'}}$ if $(j,j') \in E(H_i)$ else we orient all
	edges from $V_{x_{i,j'}}$ to $V_{x_{i,j}}$ if $(j',j) \in E(H_i)$. Notice that the orientations are well-defined as the elements of $\R$ are pairwise edge-disjoint. The remaining edge of $G$ (those
	having two endpoints in the same part $V_i$ or those between $V_i$ and $V_j$ where $i,j$ are not both in some
	element of $\R$) are oriented arbitrarily.
	
	Fix some $H_i \in F$. Then, $d_H(G)$ is at least  the probability that a randomly chosen $h$-set of $G$ is
	a transversal of $V_{x_{i,1}},\ldots,V_{x_{i,h}}$, as any such transversal induces a copy of $H_i$ in $G$.
	But the probability that a randomly chosen $h$-set of $G$ is such is at least $h!/r^h$, so
	$d_H(G) \ge h!/r^h$. It therefore remains to prove that
	$$
	\frac {h!}{r^h} > d(H) = \frac{h!2^{-\binom{h}{2}}}{aut(H)}
	$$
	so it suffices to prove that $r^h < 2^{\binom{h}{2}}$ or, equivalently, $2r^2 < 2^h$.
	Indeed, this holds since $r = h\lceil \sqrt{hk} \rceil$ and since $h \ge (1+\epsilon)\log k$.
\end{proof}
\begin{problem}
	Determine some small values of $f(k)$. In particular, determine $f(2)$.
\end{problem}

Let $g(h)$ denote the maximum size of a dominant subset of $\T_h$. Of course, we do not expect to obtain an exact formula for $g(h)$, as there is no such exact formula for $|\T_h|$. But perhaps good asymptotic values could be of obtained.
\begin{problem}
	Provide good estimates for $g(h)$.
\end{problem}

\bibliographystyle{plain}

\bibliography{references}

\end{document}